\begin{document}

\textwidth 5.9in
\textheight 7.9in

\evensidemargin .75in
\oddsidemargin .75in

\newtheorem{Thm}{Theorem}
\newtheorem{Lem}[Thm]{Lemma}
\newtheorem{Cor}[Thm]{Corollary}
\newtheorem{Prop}[Thm]{Proposition}
\newtheorem{Rm}{Remark}

\def\a{{\mathbb a}}
\def\C{{\mathbb C}}
\def\A{{\mathbb A}}
\def\B{{\mathbb B}}
\def\D{{\mathbb D}}
\def\E{{\mathbb E}}
\def\R{{\mathbb R}}
\def\P{{\mathbb P}}
\def\S{{\mathbb S}}
\def\Z{{\mathbb Z}}
\def\O{{\mathbb O}}
\def\H{{\mathbb H}}
\def\V{{\mathbb V}}
\def\Q{{\mathbb Q}}
\def\Cn{${\mathcal C}_n$}
\def\CM{\mathcal M}
\def\CG{\mathcal G}
\def\CH{\mathcal H}
\def\CT{\mathcal T}
\def\CF{\mathcal F}
\def\CA{\mathcal A}
\def\CB{\mathcal B}
\def\CD{\mathcal D}
\def\CP{\mathcal P}
\def\CS{\mathcal S}
\def\CZ{\mathcal Z}
\def\CE{\mathcal E}
\def\CL{\mathcal L}
\def\CV{\mathcal V}
\def\CW{\mathcal W}
\def\IC{\mathbb C}
\def\IF{\mathbb F}
\def\IK{\mathcal K}
\def\IL{\mathcal L}
\def\IP{\bf P}
\def\IR{\mathbb R}
\def\IZ{\mathbb Z}

\title{On infinite order corks}
\author{Selman Akbulut}
\thanks{Partially supported by NSF grants DMS 0905917}
\keywords{}
\address{Department  of Mathematics, Michigan State University,  MI, 48824}
\email{akbulut@math.msu.edu }
\subjclass{58D27,  58A05, 57R65}
\date{\today}
\begin{abstract} 
We construct an infinite order loose cork.
 \end{abstract}

\date{}
\maketitle

\setcounter{section}{-1}

\vspace{-.2in}

\section{Introduction }

A cork is a pair $(W,f)$, where $W$ is a compact contractible Stein manifold, and $f:\partial W\to \partial W$ is an involution, which extends to a self-homeomorphism of $W$, but does not extend to a self-diffeomorphism of $W$.  It follows that, if $F: W\to W$ is any homeomorphism extending $f$, and $W_{F}$ is the smooth structure pulled back from $W$ by $F$, then $W_{F}$ is an exotic copy of $W$ relative to $\partial W$.  We say $(W,f)$ is a cork of $M$,  if there is an imbedding $g: W\hookrightarrow M$  and cutting $g(W)$ out of $M$ and re-gluing with $g\circ f $  produces an exotic copy $M':= M(f,g)$ of $M$: 
 $$M=W\cup_{g} [M-g(W)]  \mapsto M(f,g)=W\cup _{g\circ f}[M-g(W) ]$$
 
The operation $M\mapsto M'$ is called 
{\it cork-twisting} $M$ along $W$. A first example of a cork appeared in \cite{a1}; and in \cite{m} and \cite{cfhs}, it was proven that, any exotic copy $M'$ of a closed simply connected $4$-manifold $M$ is obtained by cork twisting along a contractible manifold as above. Furthermore  in \cite{am}, it was shown that this contractible manifold  can be taken to be a Stein manifold. Since then, the Stein condition has become a part of the definition of a cork 
(e.g. \cite{ay1}, \cite{am}). Cork twisting can make $4$-manifolds exotic either by varying $g$ or $f$. For example,  it is possible to obtain infinitely many distinct exotic copies of a manifold by twisting along a fixed cork $(W,f)$ while varying its imbedding $g$, i.e. corks can be knotted (\cite{ay3}). So it is a natural to ask whether the same can be accomplished by fixing an imbedding $g=g_{0}$, and varying $f$ instead. For simplicity, let us denote $M(f)= M(f, g_{0})$. Clearly to do this we need to abandon the ``involution'' assumption on $f$, then hope that the infinitely many iterations $f^{n}=f\circ f\circ \cdots \circ f$  induce distinct smooth structures on $M$ via the cork-twistings $M \mapsto M(f^{n})$.

\vspace{.1in}

Finding infinite order corks is an open problem. Here, we will discuss construction of a weaker version of this object, namely an infinite order {\it loose cork} $(W, f)$, where the  Stein property on $W$ is dropped. We do this by introducing a technique we call a $\delta$-move, which is basically introducing a  $2/3$-cancelling handle pair along an appropriate curve $\delta \subset \partial W$, and then twisting $W$ along the $2$-handle similar to Gluck-twist. Previously, we proposed an infinite cork example in \cite{a5} and then withdrew our paper due to a mistake. More recently, in \cite{g1} R. Gompf has announced a similar example of an infinite order loose cork, with a correct proof. The example discussed here (Figure~\ref{p1}) turns out to coincide with Gompf's (Remark~\ref{disc}), so in particular this $\delta$-move technique explains his proof as well. The difference between our approach and that of \cite{g1} is similar to the difference between  \cite{a6} and  \cite{g2}; namely in our case we use canceling $2/3$-handle pairs to generate $3$-manifold diffeomorphisms, whereas \cite{g1} produces $3$-manifold diffeomorphisms by twisting along their sub-tori. Since our method uses a $3$-handle, initially we do not expect the resulting loose corks obtained this way to have Stein property (at least it is not obvious).  So we still do not know if infinite order corks exist. Let us call a cork $(W,f)$  without the Stein property a {\it loose cork}.

\begin{Thm}
The manifold $W$ in Figure \ref{p1} is an infinite order loose cork (its cork twisting map is a $\delta$-move, which will be described below).
\end{Thm}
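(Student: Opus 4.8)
\medskip
\noindent\textbf{Sketch of the intended proof.}
The plan is to argue in four stages: (i) make the $\delta$-move explicit as a self-diffeomorphism $f$ of $\Sigma:=\partial W$; (ii) check that $f$, hence every iterate $f^{n}$, extends to a self-homeomorphism of $W$; (iii) show that no nonzero power of $f$ extends to a diffeomorphism, and more strongly that the pulled-back smooth structures $W_{f^{n}}$ are pairwise non-diffeomorphic rel $\partial$; and (iv) read off from Figure~\ref{p1} that $W$ is contractible, so that $(W,f)$ is a loose cork of infinite order.

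For (i) I would fix the handle picture of $W$ from Figure~\ref{p1}, locate the curve $\delta\subset\Sigma$, and set up the $\delta$-move concretely: introduce a cancelling $2/3$-handle pair (so $W$ is unchanged up to diffeomorphism) whose $2$-handle is attached along $\delta$; the attaching region $S^{1}\times D^{2}\subset\Sigma$ of this $2$-handle fattens to a copy of $T^{2}\times[0,1]$, and re-gluing the $2$-handle by a Gluck-type map --- equivalently, cutting $\Sigma$ along this torus and re-gluing by a Dehn twist --- followed by re-cancelling the pair produces a self-diffeomorphism $f:\Sigma\to\Sigma$ supported near $\delta$. The first things to verify are that this $f$ is well defined and that, as a mapping class of $\Sigma$, it has \emph{infinite} order (the associated torus twist being a parabolic $SL(2,\Z)$-type element), in parallel with the sub-torus twists used in \cite{g1}.

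Stage (ii) is comparatively formal. Since $\Sigma$ is a homology $3$-sphere and $W$ a contractible topological $4$-manifold bounding it, Freedman's theory gives that such a filling is unique rel $\Sigma$; together with the standard extendability of self-homeomorphisms of homology-sphere boundaries of contractible manifolds, every orientation-preserving self-homeomorphism of $\Sigma$ --- in particular each $f^{n}$ --- extends over $W$. Equivalently, the $\delta$-move modifies $W$ only inside a region where the modification is topologically inessential (a Gluck twist is topologically trivial), so $W_{f^{n}}$ is homeomorphic to $W$ rel $\partial$ for all $n$.

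Stage (iii) is the crux and the expected main obstacle: infinite order in $\pi_{0}\mathrm{Diff}(\Sigma)$ does not by itself produce infinitely many smooth $4$-manifolds. I would fix an embedding $g_{0}:W\hookrightarrow M$ into a suitable closed (or bounded) $4$-manifold $M$ for which the cork twists $M(f^{n})$ are separated by a smooth invariant --- the natural arrangement being to sit the twisting torus $T^{2}\subset\Sigma$ inside $M$ as an embedded torus of square $0$ lying where gauge theory is nontrivial. Then the $\delta$-move becomes, from the point of view of $M$, a Fintushel--Stern-type torus surgery, and $n$ iterations multiply the Seiberg--Witten invariant of $M$ (equivalently, twist a multivariable Alexander-type invariant) by the $n$-th power of a fixed nontrivial factor, producing pairwise distinct values; should the Seiberg--Witten invariants degenerate, as is plausible for a loose cork built with a $3$-handle, one distinguishes the $W_{f^{n}}$ instead by the adjunction inequality, exhibiting a fixed class in $H_{2}(M)$ whose minimal genus strictly grows under successive $\delta$-moves. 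Either way the diffeomorphism types of the $W_{f^{n}}$ are separated; combined with the contractibility of $W$ read off in stage (iv), this shows $(W,f)$ is an infinite order loose cork whose twisting map is the $\delta$-move. The genuinely hard input is this last gauge-theoretic (or genus-function) computation --- everything upstream is bookkeeping once the $T^{2}\times[0,1]$ model near $\delta$ is in place.
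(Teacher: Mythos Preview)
Your outline points in the right direction --- embed $W$, identify the $\delta$-twist with a torus/knot surgery, separate by Seiberg--Witten --- and this is indeed the paper's strategy. But the proposal is schematic precisely where the paper does the real work, and one computational claim is off.

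The substance of the paper's proof is a concrete handlebody construction: attach three specific $2$-handles (along curves $a,b,c$ with framings $-1,-1,0$) and a $3$-handle to $W$ to obtain a bounded manifold $S$; check that $\delta$ remains unknotted in $\partial S$; then a Kirby-calculus computation (Figures~\ref{p5}--\ref{p7}) shows that the $n$-fold $\delta$-twist $S_n$ is exactly Fintushel--Stern \emph{knot surgery} on $S$ along the torus in a cusp $C\subset S$, using the $n$-twist knot $K_n$. Recognizing $S$ as an explicit Stein manifold (Figure~\ref{p8}) lets one compactify it, via \cite{lm} or \cite{ao}, to a closed symplectic $M$ with $b_2^{+}>1$, after which Taubes plus \cite{fs} distinguishes the $M_n$. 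Your stage~(iii) replaces all of this with ``a suitable $M$'' and ``a Fintushel--Stern-type torus surgery'': that is exactly what has to be \emph{built and verified}, and it is the content of the proof, not a formality.

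Your assertion that $n$ iterations multiply the Seiberg--Witten invariant by the $n$-th power of a fixed factor does not match what happens here: the identification is with knot surgery by the twist knot $K_n$, and the distinguishing input is that the Alexander polynomials $\Delta_{K_n}$ are pairwise distinct --- not that a single polynomial is raised to a power.

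Two smaller points. First, the paper does not aim at your stronger target of distinguishing the $W_{f^{n}}$ rel boundary; it distinguishes the ambient $S_n$, and to pass from ``the closed $M_n$ are distinct'' to ``the bounded $S_n$ are distinct'' Lemma~\ref{ex} supplies a separate argument that every self-diffeomorphism of $\partial S$ (a $T^{2}$-bundle over $S^{1}$ with monodromy $a^{2}b$) extends over $S$. You do not mention this extension step. Second, your genus-function/adjunction fallback is neither needed nor used. Stages~(i) and~(ii) of your plan are fine but routine; the genuinely missing ingredient is the explicit enlargement $W\subset S$ and the Kirby calculus identifying iterated $\delta$-moves with twist-knot surgery.
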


  \begin{figure}[ht]  \begin{center}
 \includegraphics[width=.27\textwidth]{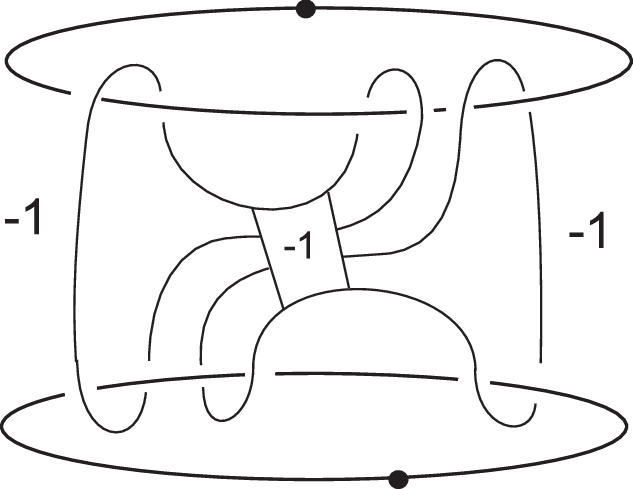}       
\caption{W}      \label{p1} 
\end{center}
 \end{figure}

\vspace{-.1in}

\section{Construction}

First recall the $3$-manifold difeomorphism of Figure~\ref{p2}, that is used to describe the Gluck twisting operation in (e.g. \cite{a4} p.65), which is a $4$-manifold operation. Here we will utilize this operation to generate $3$-manifold diffeomorphisms of the boundary of a $4$-manifold $X$. 

\begin{figure}[ht]  \begin{center}
 \includegraphics[width=.4\textwidth]{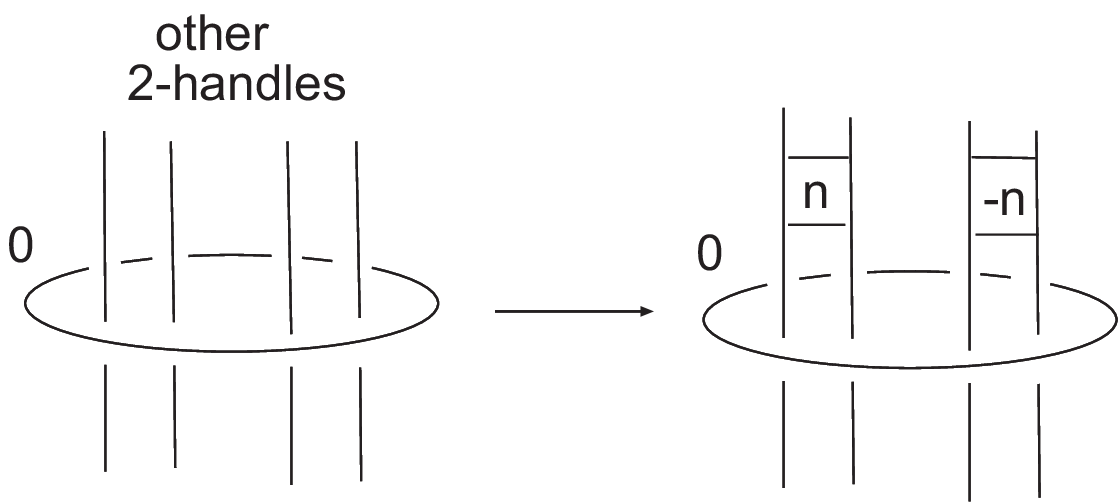}       
\caption{Gluck twisting}      \label{p2} 
\end{center}
 \end{figure}
 
 \newpage
 
 Let $X$ be any $4$-manifold with boundary, and $C$ be a zero framed circle in $\partial X$, and  $\delta \subset X$ be an unknot in $\partial X$, which is obtained by connected summing two parallel copies of  $C$ along some possibly long complicated band (framed arc), i.e. $\delta= C_{+}\cup \mbox{band} \cup C_{-}$ as indicated in Figure~\ref{p3}. 
Define $\delta$-move to be the diffeomorphism 
$$ f_{\delta}:  \partial X\to \partial X$$ 
obtained by first attaching a $2$-handle to $\delta $ with $0$-framing and canceling it with a $3$-handle (recall $\delta$ is an unknot), then blowing up along $C_{+}$ a $1$-framed circle, then sliding it along the $0$-framed $\delta$, and then blowing down along $C_{-}$ circle (as described in Figure~\ref{p3}). At the end of this operation the resulting  right and left twists cancel each other, and we are left with the same picture; so this is a $3$-manifold self-diffeomorphism of $\partial X$. Here we can have any number of framed knots (2-handles) and circles-with-dots ($1$-handles) going through $C$, since this is only a  $3$-manifold diffeomorphism. 
 
  \begin{figure}[ht]  \begin{center}
 \includegraphics[width=.4\textwidth]{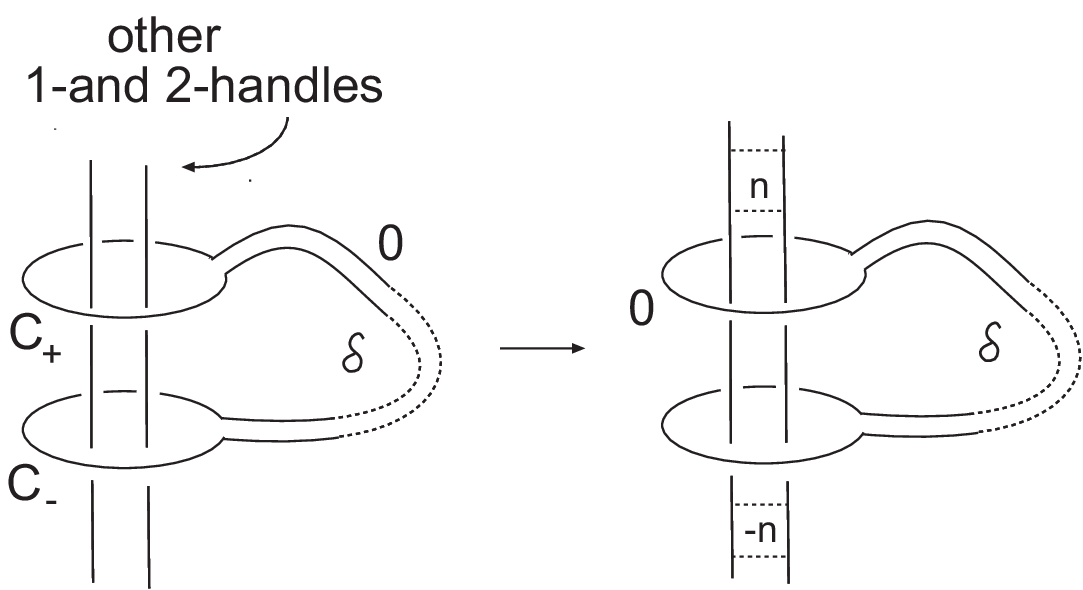}       
\caption{$\delta$--move, performed $n$ times.}      \label{p3} 
\end{center}
 \end{figure}

 To do this operation, we need an interesting $\delta$-loop on the boundary, as described above. For example, in case of the manifold $W$ of Figure~\ref{p1}, we can use the green circle of Figure~\ref{p4} as a $\delta$-loop (check that it is the unknot on the boundary).

 \begin{figure}[ht]  \begin{center}
 \includegraphics[width=.35\textwidth]{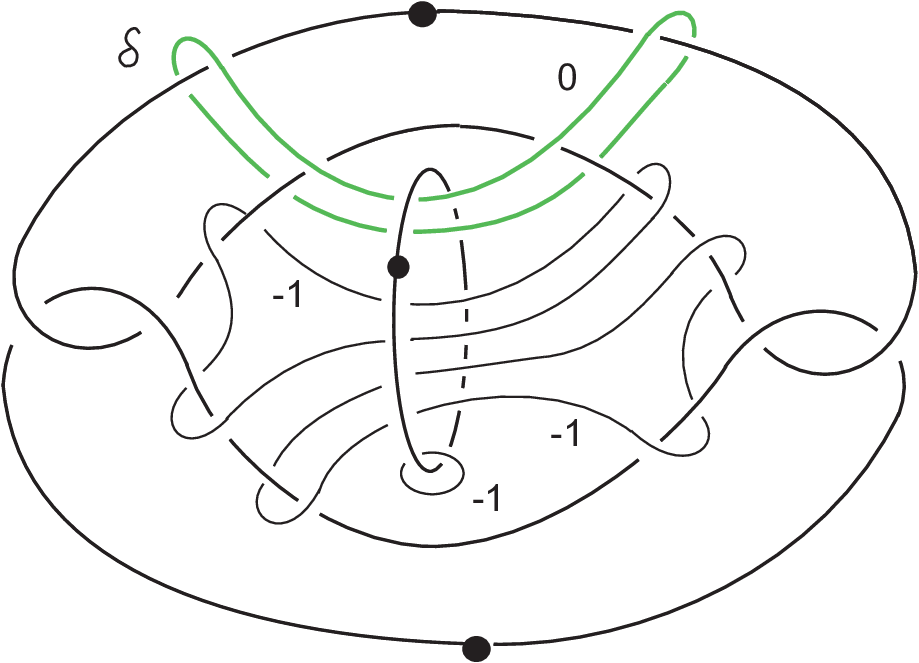}       
\caption{W with a $\delta$-loop}      \label{p4} 
\end{center}
 \end{figure}
 
 \newpage
 
\begin{proof}[Proof of Theorem 1.1] Now construct a new $4$-manifold $S$ from $W$, by attaching three $2$-handles to its boundary along the three framed circles $a, b$ and $c$, with framings $-1, -1$ and $0$, respectively (Figure~\ref{p5}); and then attach a $3$-handle. That is, we extended $W$ to $S$ by gluing a cobordism $H$ to its boundary. Check that the loop $\delta$ is still the unknot in $\partial S$, so we can still use it to introduce $2/3$-handle pair.   Now we can perform $\delta$ moves to $W\subset S$, $n$-times, and obtain the decompositions:
 $$S=W\cup_{id}H \;\;\;\mbox{and}\;\;\; S_{n}=W\cup_{f_{\delta}^{n}}H$$
 
 \begin{figure}[ht]  \begin{center}
 \includegraphics[width=.35\textwidth]{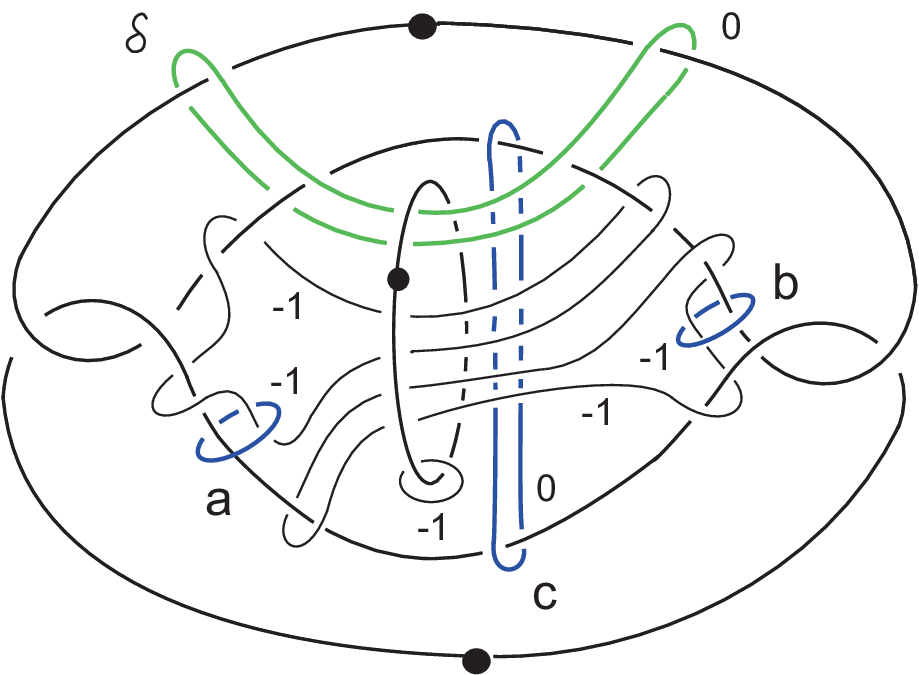}       
\caption{S}      \label{p5} 
\end{center}
 \end{figure}
 
Therefore, by cork twisting $S$ along $W$, by $n$ consecutive 
$\delta$-moves $$f_{\delta}^{n}:\partial W \to \partial W,$$ we get Figure~\ref{p6}, and then by handle slides and canceling (e.g. by performing the operation described in Figure 1.17 of \cite{a4}), we obtain the top picture of Figure~\ref{p7}, which is a description of  $S_{n}$. Since the $0$-framed loop  $\delta$ is the unknot, after the $\delta$-moves we canceled it with a $3$-handle; hence $\delta$ is no longer present in the figures Figures~\ref{p6} and ~\ref{p7}.
 
 \begin{figure}[ht]  \begin{center}
 \includegraphics[width=.35\textwidth]{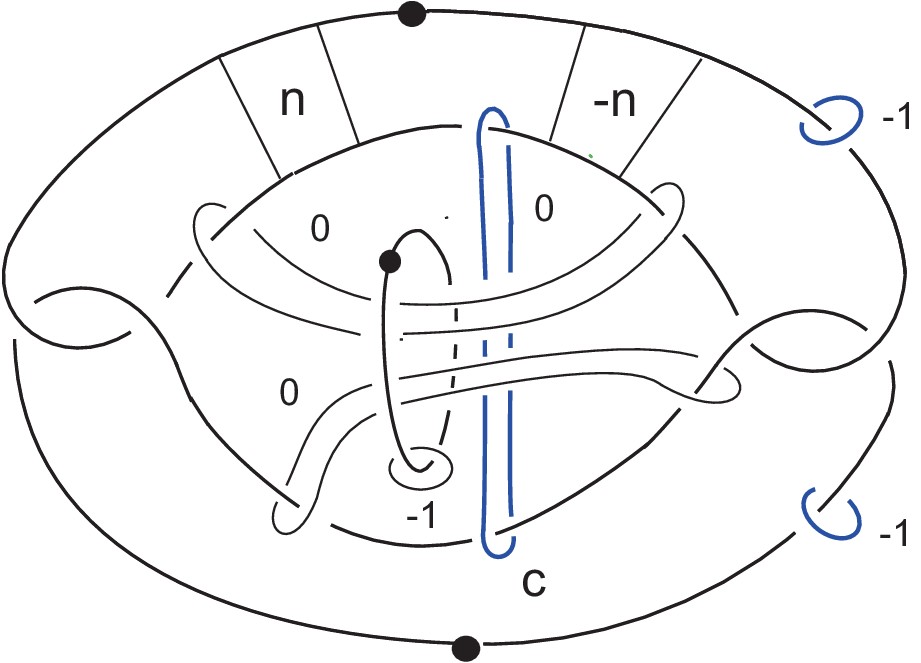}       
\caption{$S_{n}$}      \label{p6} 
\end{center}
 \end{figure}
 
\vspace{.05in}

Now it is easy to check that $S$ is the Stein manifold of Figure~\ref{p8}, and  $S_{n}$ is obtained by the Fintushel-Stern knot surgery operation to S by using the torus inside the cusp $C \subset S$, and using the $n$-twist knot  $K_{n}$ (the algorithm of drawing handlebody of knot surgery operations was introduced and discussed in \cite{a2} and \cite{a3}). By the following Lemma~\ref{ex},  all $S_{n}$ are all mutually nondiffeomorphic exotic copies of $S$, 
and hence $(W, f_{\delta})$ is an infinite order loose cork (it is loose, because we do not know if it is a Stein manifold).

 \begin{figure}[ht]  \begin{center}
 \includegraphics[width=.4\textwidth]{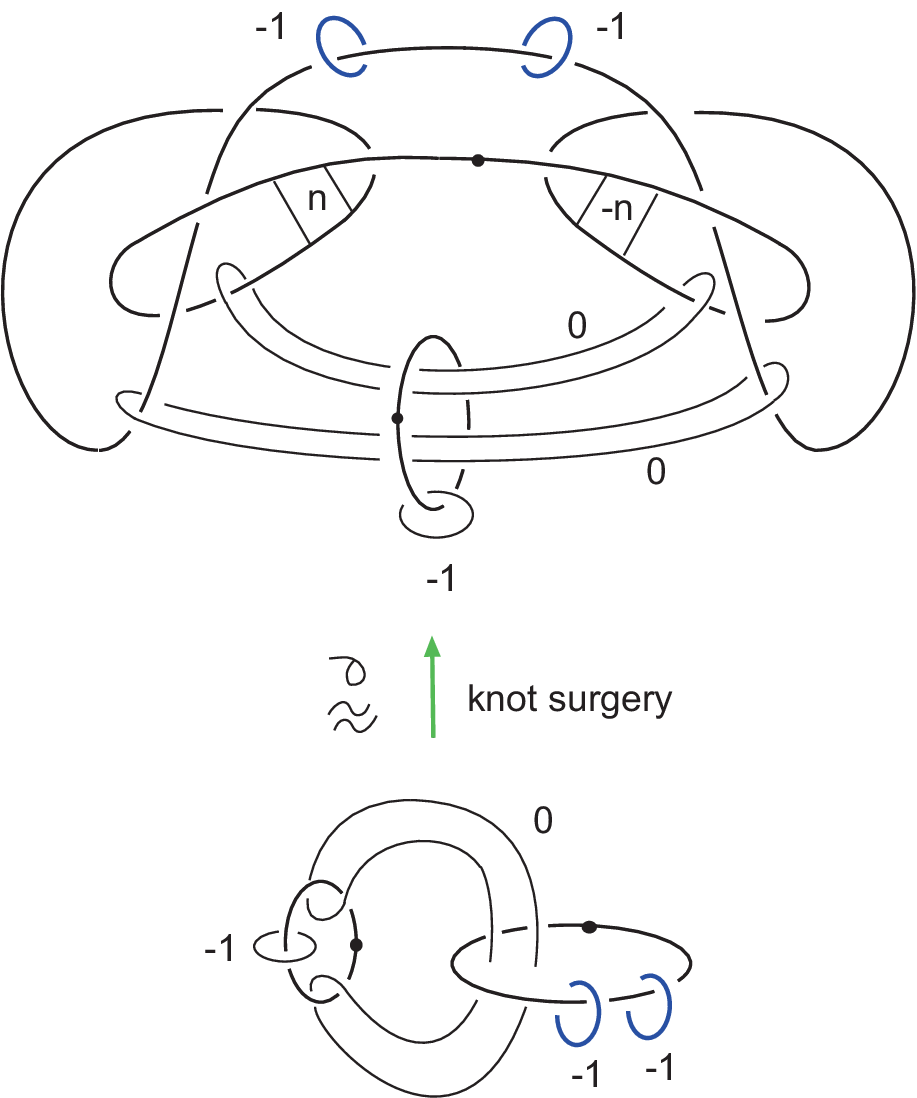}       
\caption{$S_{n}$ is obtained by knot surgery operation 
$S\rightsquigarrow S_{n}$}      \label{p7} 
\end{center}
 \end{figure}
 
% \newpage
 
  \begin{figure}[ht]  \begin{center}
 \includegraphics[width=.2\textwidth]{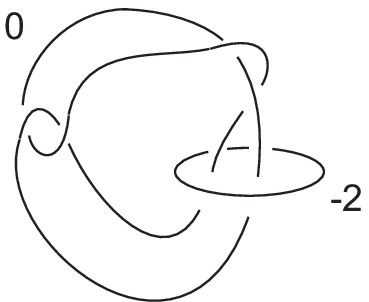}       
\caption{S}      \label{p8} 
\end{center}
 \end{figure}
\end{proof}

\begin{Lem} \label{ex}  For different values of $n=1,2,..$, the manifolds $S_{n}$ are mutually distinct exotic smooth copies of $S$. 
\end{Lem}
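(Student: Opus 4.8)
The plan is to exploit the identification, established in the proof of Theorem 1, that $S_n$ is obtained from the Stein manifold $S$ by Fintushel--Stern knot surgery along the torus $T$ sitting inside the cusp neighborhood $C \subset S$, using the twist knot $K_n$. Thus the whole problem reduces to a standard Seiberg--Witten (or Heegaard Floer) computation, together with an argument that the surgered manifolds are genuinely smooth $4$-manifolds with boundary to which such invariants apply. The first step is therefore to pin down precisely the cusp torus $T$: it is a smoothly embedded torus of square $0$ in $S$ with a cusp (a nodal fiber) in a neighborhood, so it has a neighborhood diffeomorphic to a neighborhood of the cusp fiber in the rational elliptic surface $E(1)$, and the complement of a tubular neighborhood of $T$ has the homology of a circle on the boundary side. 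Knot surgery replaces $\nu(T) = T^2 \times D^2$ by $(S^3 \setminus \nu(K_n)) \times S^1$, glued so that the meridian of $K_n$ matches $\{pt\}\times \partial D^2$.

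Next I would invoke the Fintushel--Stern formula for the relative Seiberg--Witten invariant (equivalently, the relative Bauer--Furuta or the Alexander-polynomial-twisted count): the effect of knot surgery on the small-perturbation SW invariant of the pair $(S, \partial S)$ is multiplication by the symmetrized Alexander polynomial $\Delta_{K_n}(t)$, where $t$ corresponds to twice the class dual to $T$. For the $n$-twist knot one has $\Delta_{K_n}(t) = nt - (2n+1) + nt^{-1}$ (up to the usual conventions/sign on $n$), so the invariants are pairwise distinct for distinct $n \ge 1$ because the leading coefficient $n$ distinguishes them. Since knot surgery does not change the homeomorphism type here — $T$ has a cusp neighborhood, so the complement is simply connected and the gluing preserves the intersection form, fundamental group, and boundary, so Freedman's theorem gives a homeomorphism $S_n \cong S$ rel $\partial$ (one should also note $S_n$ is simply connected because $\pi_1$ of the knot complement is normally generated by the meridian, which dies) — while the relative SW invariants differ, the $S_n$ are mutually non-diffeomorphic and each is an exotic copy of $S$.

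There are two points that need genuine care. First, $S$ has nonempty boundary, so one must work with a version of Seiberg--Witten theory adapted to $4$-manifolds with boundary — the cleanest route is to cap off $\partial S$, or rather to observe that $S$ embeds in a closed symplectic (or at least positive-definite-free) $4$-manifold in a way that makes the knot surgery localized, and then apply the closed-manifold knot surgery theorem; alternatively one uses the relative invariant of Ozsv\'ath--Szab\'o / the gluing formula, which is exactly what was developed for precisely this kind of bounded example in the cork literature. The standard trick (used, e.g., for the Mazur-type corks) is: embed $S \hookrightarrow Z$ for a suitable closed $4$-manifold $Z$ with $b^+ > 1$ and nontrivial SW invariant, arrange that the cusp torus $T$ continues to have a cusp neighborhood in $Z$, and then $Z_n := Z$ knot-surgered along $T$ has $SW_{Z_n} = \Delta_{K_n}(t)\cdot SW_Z \neq SW_{Z_m}$ for $n \neq m$; since $Z_n = S_n \cup_{\partial}(Z \setminus \mathrm{int}\,S)$ with the same piece glued on, a diffeomorphism $S_n \cong S_m$ rel $\partial$ would extend to $Z_n \cong Z_m$, a contradiction. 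This is the main obstacle: one must actually produce such an ambient $Z$ (for instance an elliptic surface $E(k)$, since $S$ or at least the relevant cork piece sits inside one by construction, the cusp $C$ being literally a Fishtail/cusp neighborhood), and check the cusp torus survives with the right framing. Once the ambient embedding is fixed, the Alexander polynomial computation for twist knots and the homeomorphism argument are routine, and the Lemma follows.
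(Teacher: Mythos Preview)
Your overall strategy---embed $S$ in a closed $4$-manifold with $b_2^+>1$ and nontrivial Seiberg--Witten invariant, then apply the Fintushel--Stern knot-surgery formula together with the pairwise distinct Alexander polynomials of the twist knots $K_n$---is exactly the paper's. The paper produces the ambient closed manifold $M$ by compactifying the Stein manifold $S$ via \cite{lm}, \cite{ao}, giving a closed symplectic manifold and hence nontrivial SW by Taubes; your suggestion of an elliptic surface is less direct, since it is all of $S$ that must embed, not merely a cusp neighborhood, but this is a secondary issue.

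The genuine gap is the phrase ``rel $\partial$''. You only argue that a diffeomorphism $S_n \cong S_m$ \emph{relative to the boundary} would extend to $Z_n \cong Z_m$, whereas the Lemma asserts that the $S_n$ are pairwise non-diffeomorphic \emph{absolutely}. An arbitrary diffeomorphism $g\colon S_n \to S_m$ restricts to some self-diffeomorphism $g|_{\partial}$ of $\partial S$, and there is no a~priori reason this is isotopic to the identity, so $g$ need not glue to the identity on the complementary piece. The paper closes exactly this gap by analyzing the mapping class group of $\partial S$: it observes that $\partial S$ is a $T^2$-bundle over $S^1$ with monodromy $A=a^2b$, isotopes any self-diffeomorphism to be fiber-preserving (\cite{bo}), solves $AB=BA$ in $SL(2,\Z)$ to obtain $B=\pm I$ or $B=\pm A$, and checks that each of these extends to a self-diffeomorphism of $S$ (and of each $S_n$). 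Consequently any diffeomorphism $S_n\to S_m$ can be composed with such an extension to become the identity on the boundary, at which point your extension-to-the-closed-manifold argument applies. Without this mapping-class-group step your argument only yields non-diffeomorphism rel $\partial$, which is strictly weaker than the Lemma's claim.
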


\begin{proof} Let $M$  be the closed symplectic manifold with $b_{2}^{+}(M)>1$, obtained by compactifying $S$ (by \cite{lm}, \cite{ao}, or \cite{a4}, p.107). Hence by \cite{t}, $M$ has nontrivial Seiberg--Witten invariant. Let $M_{n}$ be the closed manifold obtained from $M$ by replacing $S$ by $S_{n}$ inside. Hence by \cite{fs}, all $M_{n}$'s have different Seiberg--Witten invariants (because the knots $K_{n}$ have different Alexander polynomials for different values of $n$). So $M_{n}$'s are mutually distinct exotic manifolds. Now we claim that, this implies  $S_{n}$'s are distinct exotic manifolds:  To see this, consider the diffeomorphism $f:\partial S\to \partial S_{n}$ obtained by restricting the identity $M-S \to M_{n}-S_{n}$.  From the construction, we see that
$\partial S$ is a $T^{2}$ bundle over $S^{1}$ with mondromy $A=a^{2}b$, where $a,b$ are the standard Dehn twist generators of $T^{2}$: 

$$a =\small \left(
\begin{array}{cc}
1& -1 \\
0 & 1 \\
\end{array} \right), \;\; 
b =\small \left(
\begin{array}{cc}
1& 0 \\
1 & 1 \\
\end{array} \right) \;\Longrightarrow \;A =\small \left(
\begin{array}{cc}
-1& -2 \\
1 & 1 \\
\end{array} \right)$$

\vspace{.05in}

By standard 3-manifold theory, $f$ can be isotopped to a fiber
preserving isotopy (e.g. \cite{bo}), and let $B\in SL(2,\Z)$ be its action on the fiber. In particular, f has to commute with the monodromy of $\partial S$. Then by solving $AB=BA$, we get $B=\pm I$ or $B=\pm A$. In both cases the corresponding diffeomorphisms, considered as an automorphism,  $f: \partial S \to \partial S$ extends inside $S\to S$ (one is the identity, the other is induced by the PALF structure). So if $S_{n}$ was diffeomorphic to $S$, $f$ would extend to a diffeomorphism $S\to S_{n}$, which implies $M$ would be diffeomporphic to  $M_{n}$, this would be a contradiction. Similarly, to show $S_{n}$ is not diffeomorphic to $S_{m}$ (for $n\neq m$) we prove the diffeomorphism $f$ above extends to a diffeomorphism $S_{n}\mapsto S_{n}$.
\end{proof}

\newpage

\vspace{.05in}

\begin{Rm} \label{disc} In the handlebody picture of $W$ (in Figure~\ref{p1}), we can undo the $-1$ twist across the middle strands, by replacing it with a circle-with-dot with a $-1$ framed circle linking it. We can then cancel the original $-1$ framed $2$-handles of Figure~\ref{p1} with the corresponding $1$-handles, while dragging the newly introduced circle-with-dot along for a ride, which eventually becomes a ribbon knot mentioned in \cite{g1}. \end{Rm}

{Acknowledgements: I would like to thank Bob Gompf for sharing his ideas in \cite{g1}, which motivated me rethink about infinite order corks; also thank Cagri Karakurt for discussing some of the constructions of this paper with me.}

\end{document}